\newtheorem{theorem}{Theorem}[section]
\newtheorem{lemma}[theorem]{Lemma}
\newtheorem{corollary}[theorem]{Corollary}
\theoremstyle{definition}
\newtheorem{definition}[theorem]{Definition}
\newtheorem{example}[theorem]{Example}
\theoremstyle{remark}
\newtheorem{remark}[theorem]{Remark}
\numberwithin{equation}{section}
\begin{document}

\setcounter{page}{1}

\title[Approximation of Entropy Numbers ]
 {Approximation of Entropy Numbers}
\author[K.P. Deepesh \MakeLowercase{and} V.B. Kiran Kumar]{K.P. Deepesh$^1$  \MakeLowercase{and}  V.B. Kiran Kumar$^2$}

\address{$^{1}$Department of Mathematics, \newline Government Brennen College, Thalassery, Kerala, India,  670 106.}
\email{\textcolor[rgb]{0.00,0.00,0.84}{deepesh.kp@gmail.com}}

\address{$^{2}$Department of Mathematics, \newline CUSAT, Kerala, India, 682 022}
\email{\textcolor[rgb]{0.00,0.00,0.84}{kiranbalu36@gmail.com}}


\let\thefootnote\relax\footnote{Copyright 2016 by the Tusi Mathematical Research Group.}
\subjclass[2010]{Primary 47B06; Secondary 47A58..}

\keywords{Entropy numbers; Truncation Method; Operators on Banach Spaces}

\date{Received: xxxxxx; Revised: yyyyyy; Accepted: zzzzzz.
\newline \indent $^{*}$K.P. Deepesh}

\begin{abstract}
The purpose of this article is to develop a technique to estimate certain bounds for entropy numbers
of diagonal operator on $\ell^p$ spaces for $1<p<\infty$ which improves the existing bounds. 
The approximation method we develop in this direction works for a very general class of operators between
Banach spaces, in particular reflexive spaces. 

As a consequence of this technique we also obtain the following results for a bounded linear operator $T$ 
between two separable Hilbert spaces:
\begin{equation*}
\epsilon_n(T)=\epsilon_n(T^*)=\epsilon_n(|T|) \; \text{for each}\; n\in \mathbb N,
\end{equation*}
where $\epsilon_n(T)$ is the $n^{th}$ entropy number of $T$.
This gives a complete answer to the question posed by B. Carl \cite{carl} in the setting of separable Hilbert spaces.
\end{abstract}
\maketitle

\section{Introduction and Preliminaries} 
\label{intro}
Let $X,Y$ be normed linear spaces and $T:X\rightarrow Y$ be a bounded linear operator. Let $U_X={\{x\in X:\|x\|\leq 1}\}$, the closed unit ball of $X$. Then the $n^{th}$ entropy number of $T$ is defined by
\begin{equation*}
  \epsilon_{n}(T):=\inf{\{\epsilon>0: \text{there exists}\; y_1,y_2,\dots,y_k\in Y\, \text{such that}\; T(U_X) \subseteq   \bigcup_{i=1}^{k}U(y_i, \epsilon),\; k\leq n}\}.
\end{equation*}
Here $U(y_0,r)={\{y\in Y:\|y-y_0\|\leq r}\}$ is the closed ball with center $y_0$ having radius $r>0$.

Among various measures of compactness, the entropy numbers are well known in studying the compactness of
bounded linear operators between Banach spaces. However, not much have been known about the
entropy numbers of general bounded linear operators between Banach spaces even though
numerous works have been made on estimating these quantities (see \cite{carl,gordon,kuhn,kuhn1}).

Note that the first entropy number of $T$ is equal to $\|T\|$ and $(\epsilon_k (T))$ is a non increasing sequence.
The entropy numbers of a bounded linear operator $T$ between Banach spaces measures the degree of compactness of $T$
and it is well known that
$$T \mbox{ is compact if and only if } \displaystyle \lim_{k\to \infty}\epsilon_k (T)=0.$$

The entropy numbers of an operator satisfy most of the properties of {\it s-numbers}
(See \cite{pie}). Hence in literature, entropy numbers are called pseudo s-numbers and
they play an important role in the theory of operator ideals and are also used in statistical learning theory.

 Consider the problem of approximating a quantity (in our case, the entropy numbers) assigned to $T\in BL(X, Y)$ (the space of all bounded linear operators between $X$ and $Y$), using a sequence of operators $T_n\in BL(X, Y)$ that converges to $T$ in some sense. There is a rich literature on such approximation techniques for the quantities like norm, eigenvalues, approximation numbers etc. Projection methods, and more generally finite section methods, mainly deal with
such kind of problems in which one tries to make use of finite rank operators or truncated
projections of an operator between infinite dimensional spaces to study about the operator.
%
It has been shown that the norms, eigenvalues, singular values and approximation numbers \cite{Bot,DKN,KVBR}, solutions of systems of linear equations etc. can be estimated by such kind of methods, under
various assumptions on the spaces or the operators and on the sense of convergence used.

Here we develop such a method to estimate the entropy numbers of operators acting between infinite dimensional Banach spaces.
To be more precise, for $T\in BL(X, Y)$, we consider the sequence $(T_n:= Q_nTP_n)$ of operators 
(usually each of them of finite rank) which converges to $T$ in the Strong Operator Topology (SOT), Weak or Weak$^{*}$ Operator Topology (WOT, WOT$^{*}$). We will show that the sequence $\epsilon_k(T_n)$ of entropy numbers of $T_n$ converges to $\epsilon_k(T)$ for each $k\in \mathbb{N}$, under some assumptions on the space $Y$ and on the operators $(Q_n)$ and $(P_n)$.

Also notice that in the setting of many Banach spaces with Schauder basis, we can choose the sequence $(T_n)$ to be the truncation of $T$ (that is $T_n=P_nTP_n$, where $P_n$'s are projections onto finite dimensional subspace spanned by first $n$ elements of the basis with $\|P_n\|\leq 1$ for each $n\in \mathbb N$). Hence it helps us to use the finite dimensional linear algebraic techniques in the computation of entropy numbers of an infinite dimensional operator. As an application of this techniques in the Banach space setting, we obtain a better estimate for the entropy numbers of certain diagonal operators between $\ell_p$ spaces, $1< p< \infty$.

As a second application of our main result we obtain the following result.
\begin{theorem}
Let $H_1,H_2$ be two complex separable Hilbert spaces and $T\in BL(H_1,H_2)$. Then
\begin{equation*}
\epsilon_n(T)=\epsilon_n(T^*)=\epsilon_n(|T|),
\end{equation*}
where $T^*$ denotes the Hilbert adjoint of $T$ and $|T|$ is the unique positive square root of $T^*T$.
\end{theorem}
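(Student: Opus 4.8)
The plan is to build everything on two observations. The first is that the $n$th entropy number of an operator depends only on the image $T(U_X)$ of the unit ball, and that this set-level quantity is invariant under an isometry: if $V$ is linear and isometric on a closed subspace $M$ of a Hilbert space and $S \subseteq M$ is bounded, then $S$ and $V(S)$ admit the same minimal number of $\epsilon$-balls for every $\epsilon > 0$. One direction is immediate, since $\|Vs - Vy\| = \|s-y\|$ transports any cover of $S$ to a cover of $V(S)$; for the converse one projects the centres of a cover of $V(S)$ orthogonally onto the closed subspace $V(M)$, which does not increase their distance to points of $V(S)$, and then pulls them back through the isometry. The second observation is the polar decomposition $T = U|T|$, in which $U$ is a partial isometry acting isometrically on $\overline{\operatorname{ran}|T|} = (\ker T)^{\perp}$ and vanishing on the orthogonal complement.

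Granting these, the equality $\epsilon_n(T) = \epsilon_n(|T|)$ is essentially free. Indeed $T(U_{H_1}) = U\bigl(|T|(U_{H_1})\bigr)$, and $|T|(U_{H_1})$ lies inside $\overline{\operatorname{ran}|T|}$, the subspace on which $U$ is isometric; the invariance observation then equates the covering numbers of the two image sets at every radius, hence the entropy numbers.

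The delicate equality is $\epsilon_n(T) = \epsilon_n(T^*)$, which is exactly the form of Carl's question; I expect this to be the main obstacle. The difficulty is that $|T|$ and $|T^*|$ are in general \emph{not} unitarily equivalent, because $\ker T = \ker|T|$ and $\ker T^* = \ker|T^*|$ may have different dimensions, so one cannot reduce to unitary invariance directly. Applying the previous step to $T^*$ gives $\epsilon_n(T^*) = \epsilon_n(|T^*|)$, and polar decomposition yields $|T^*| = U|T|U^*$. The route I would take is to compute the relevant image set by hand: $U^*(U_{H_2}) = U_{\overline{\operatorname{ran}|T|}}$, since $U^*$ is isometric on $\overline{\operatorname{ran} T}$ and annihilates its complement, while $|T|\bigl(U_{\overline{\operatorname{ran}|T|}}\bigr) = |T|(U_{H_1})$ because $|T|$ kills $(\overline{\operatorname{ran}|T|})^{\perp}$. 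Hence $|T|U^*(U_{H_2}) = |T|(U_{H_1})$ as sets, and a final application of the invariance observation, now to the isometric action of $U$ on the left, strips off $U$ and gives $\epsilon_n(|T^*|) = \epsilon_n(|T|)$. Combining the three identities yields $\epsilon_n(T) = \epsilon_n(T^*) = \epsilon_n(|T|)$.

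It is worth contrasting this with the route suggested by the abstract, which proceeds through the main approximation theorem: take the finite-rank truncations $T_n = Q_n T P_n$ associated with orthonormal bases, prove the three equalities for the matrices $T_n$ — where $|T_n|$ and $|T_n^*|$ genuinely are unitarily equivalent, so the finite-dimensional case is routine — and pass to the limit, using that $T_n^* = P_n T^* Q_n$ is again a truncation so the theorem applies to both $T$ and $T^*$. The awkward point there is $|T|$, since $|T_n| \neq Q_n |T| P_n$ forces a separate argument that $\epsilon_k(|T_n|) \to \epsilon_k(|T|)$; the direct isometry computation above sidesteps this entirely, which is why I would organise the proof around it.
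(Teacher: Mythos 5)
Your proof is correct, but it is a genuinely different argument from the paper's own proof of this statement. The paper (Theorem~\ref{edmund-karl}) proves it with the approximation machinery that is its main contribution: truncate by finite--rank orthogonal projections, so that $Q_nTP_n$ and $P_nT^*Q_n$ are compact and converge in SOT to $T$ and $T^*$; apply the Edmunds--Edmunds identity $\epsilon_k(Q_nTP_n)=\epsilon_k(P_nT^*Q_n)$ for compact operators, and pass to the limit using the main approximation theorem (Theorem~\ref{th-main}). For $|T|$ the paper does not run into the obstacle you anticipated (needing $\epsilon_k(|T_n|)\to\epsilon_k(|T|)$ with $|T_n|\neq Q_n|T|P_n$): instead of taking the modulus of the truncation, it truncates the modulus, writing $P_n|T|P_n=P_nV^*TP_n$ via $|T|=V^*T$, so the multiplicativity inequality $\epsilon_k(SRT)\leq\|S\|\,\epsilon_k(R)\,\|T\|$ plus Theorem~\ref{th-main} give $\epsilon_k(|T|)\leq\epsilon_k(T)$, and symmetrically for the reverse inequality. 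Your route --- polar decomposition plus the set-level lemma that covering numbers are invariant under an isometry of a closed subspace (project the centres onto the range, which is closed since the subspace is complete, then pull back) --- never invokes the approximation theorem at all. It is essentially Carl's proof, which the paper reproduces separately as Theorem~\ref{hilbert}; Carl's version extracts each inequality in one line from multiplicativity applied to $T=V|T|$, $|T|=V^*T$, $T^*=|T|V^*$, $|T|=T^*V$, whereas you obtain the same facts from the explicit image-set identities $T(U_{H_1})=U\bigl(|T|(U_{H_1})\bigr)$ and $|T^*|(U_{H_2})=U\bigl(|T|(U_{H_1})\bigr)$. What your argument buys is generality and self-containedness: separability is never used (the paper itself remarks that Carl's approach removes that hypothesis), and everything happens at the level of covering numbers. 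What the paper's route buys is the reduction of the bounded case to the compact case through truncation --- a pattern that, unlike polar decomposition, makes sense in Banach spaces with norm-one projections, which is exactly how the authors propose to attack Carl's question beyond the Hilbert space setting.
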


We remark that the above result for compact operators on a  Hilbert space is proved by   D. E. Edmunds and R. M. Edmunds  \cite{EE}. In a more general setting, namely for bounded operators defined on a Hilbert space  using the polar decomposition was suggested by Brendt Carl in  \cite{Carlreview}); We also remark that Carl's technique works for bounded operators defined between two different Hilbert spaces.

The article is organized as follows. In the next section, we establish the approximation technique for entropy
numbers of a bounded linear operator between two Banach spaces, under the assumption that the co-domain is a reflexive
space. In the third section, we obtain a better estimate for the entropy numbers of certain diagonal operators between $\ell_p$ spaces, $1< p< \infty$, as an application of the approximation results to the non-Hilbert space settings. In the final section, we prove the above Theorem and also discuss some of the future possibilities of the considered problem at the end of the article.

\section{Approximation of Entropy Numbers of an Operator}\label{sec-2}

Let $S,T\in BL(X,Y)$. It is a well known property of the  entropy numbers that (see \cite{stephani} for eg.)
$$\epsilon_k(T+S)\leq \epsilon_k(T)+\|S\|,\; k\in \mathbb N.$$
It follows that

\[
\epsilon_k(T_n) = \epsilon_k(T_n - T +T)\leq \epsilon_k(T)+\|T_n - T\|.
\]

Replacing the role of $T_n$ and $T$ in the above inequality, we get
\[
\epsilon_k(T) =  \epsilon_k(T_n)+\|T - T_n\|.
\]

This implies that $\epsilon_k(T_n) \to \epsilon_k(T)$  as $n\to \infty$ if $ T_n \to T$  in the norm sense.
In other words, it says that the entropy number function $\epsilon_k$ (for a fixed $k$)
is a continuous real valued function on $BL(X,Y)$, with respect to the operator norm topology on $BL(X,Y)$.
In finite section methods and applications, we approximate $T$ by sequences of finite rank operators $(T_n)$ in $BL(X,Y)$.
Hence the convergence of $(T_n)$ to $T$ in the norm topology will force us to restrict our attention to compact operators. 
Therefore we are interested to approximate $\epsilon_k(T)$ by $\epsilon_k(T_n)$ when $(T_n)$ converges to $T$ in
a weaker senses of convergence (weaker than norm convergence).

In this regard, for $T\in BL(X, Y)$ and for each $n\in \mathbb{N}$, consider
$$T_n := Q_nTP_n \mbox{ where } Q_n \in BL(Y), P_n \in BL(X)
\mbox{ with }\|Q_n\|\|P_n\|\leq 1.$$
Then due to the
property $\epsilon_k(SRT)\leq \|S\|\epsilon_k(R)\|T\|$  (see \cite{stephani} for eg.), we have
$$\epsilon_k(T_n)\leq \epsilon_k(T)$$
and hence
$$\displaystyle\sup_{n}\epsilon_k(T_n)\leq \epsilon_k(T).$$
Thus if the limit of $\epsilon_k(T_n)$ exists, it can not be greater than $\epsilon_k(T)$.
To show the existence of the limit, we start with a simple lemma. First we define entropy numbers of a bounded subset of a metric space.

\begin{definition}
Let $S$ be a bounded subset of a metric space. For $n \in \mathbb{N}$, the $n$-th entropy number
of $S$ is the infimum of all $\epsilon >0$
for which there exists a cover for $S$ with $k$ closed balls, where $k\leq n$, having radii $\epsilon$.
That is,
$$\epsilon_n(S)= \inf\{\epsilon > 0 : \,\, S\subseteq \displaystyle\cup_{i=1}^k\, U(x_i, \epsilon),\quad k\leq n\}$$
The collection $\{x_1, x_2, \ldots, x_k\}\subseteq X$ is called an {\it $\epsilon$-net} for $S$.
Without loss of generality, we will assume that in an  {\it $\epsilon$-net} for $S$, each ball $U(x_i, \epsilon)$ intersects 
with $S.$
\end{definition}

Note that the $n^{th}$ entropy number $\epsilon_n(T)$ of $T\in BL(X,Y)$ is the $n^{th}$ entropy number of $T(U_X)$.
\begin{lemma}\label{lemma-1}
Let $A$ be a bounded non empty subset of a normed linear space $X$ and let
$M=\displaystyle\sup_{x\in A}\|x\|.$ Then there exists an $\epsilon-$net $\{x_1, x_2, \ldots, x_k\}\subset X$ for $A$
with $\|x_i\|\leq 2M$ for each $i=1, 2, \ldots, k$. In particular if $T: Y\to X$ is a bounded linear map then there exists
an $\epsilon-net$ $\{x_1, x_2, \ldots, x_k\}\subset X$ for $T(U_{Y})$, such that for each $i=1, 2, \ldots, k,
\|x_i\|\leq 2\|T\|.$
\end{lemma}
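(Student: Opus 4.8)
The plan is to prove the slightly stronger statement that \emph{any} finite $\epsilon$-net for $A$ can be replaced by one with no more balls whose centers all lie in $U(0,2M)$; the ``in particular'' clause then follows by applying this to $A=T(U_Y)$, for which $M=\sup_{\|y\|\le 1}\|Ty\|=\|T\|$, so that $2M=2\|T\|$. Two preliminary observations drive everything. First, since $\|x\|\le M$ for every $x\in A$, we have $A\subseteq U(0,M)$. Second, as recorded in the definition above, we may assume without loss of generality that every ball of a given $\epsilon$-net meets $A$: any ball $U(y_i,\epsilon)$ disjoint from $A$ contributes nothing to covering $A$ and can be discarded, which only decreases the number of balls.

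I would then split into two cases according to the size of $\epsilon$ relative to $M$. If $\epsilon\ge M$, then $A\subseteq U(0,M)\subseteq U(0,\epsilon)$, so the single point $\{0\}$ is an $\epsilon$-net for $A$ with $\|0\|=0\le 2M$, and we are done. If instead $\epsilon< M$, I would start from any $\epsilon$-net $\{y_1,\dots,y_k\}$ in which each $U(y_i,\epsilon)$ meets $A$, and show that the given centers already have the required property. For each $i$ choose $a_i\in A\cap U(y_i,\epsilon)$; then $\|y_i\|\le\|y_i-a_i\|+\|a_i\|\le \epsilon+M<2M$, so $\{y_1,\dots,y_k\}$ is itself the desired net.

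The point I expect to be the real (if modest) obstacle is resisting the natural temptation to \emph{move} each center to the nearby point $a_i\in A$: although $\|a_i\|\le M$, the ball $U(a_i,\epsilon)$ need not contain $A\cap U(y_i,\epsilon)$, and enlarging it to $U(a_i,2\epsilon)$ would spoil the radius. The correct move is the opposite one, namely to keep the original centers and merely \emph{estimate} their norms; this works precisely because the center of a ball that meets $A$ cannot be farther than $\epsilon$ from a point of norm at most $M$. The only regime in which this estimate fails to deliver the bound $2M$ is $\epsilon\ge M$, and that is exactly the regime handled trivially by the single origin-centered ball, so the two cases together cover all $\epsilon$ without ever increasing the number of balls.
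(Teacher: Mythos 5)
Your proof is correct and takes essentially the same route as the paper's: the paper also reduces to radii $\epsilon\le M$ (via $\epsilon_k(A)\le\epsilon_1(A)\le M$), invokes the standing convention that each ball of the net meets $A$, and concludes by the contrapositive of your triangle-inequality estimate (if $\|x_i\|>2M$ then $U(x_i,\epsilon)$ misses $A$). Your explicit handling of the regime $\epsilon\ge M$ by the single origin-centered ball is just a direct rephrasing of that same reduction.
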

\begin{proof}
Since $\epsilon_k(A)\leq \epsilon_1(A) \leq M,$ there exists an $\epsilon-net$ $\{x_1, x_2, \ldots, x_k\}\subset X$ 
with $\epsilon\leq M$. If $\|x_i\|>2M$. Then $\|x_i-y\|> M$ for each $y \in A$.
Thus  $B(x_i, M) \cap A =\emptyset$. Therefore we have $\|x_i\|\leq 2M$ for all $i=1, 2, \ldots, k$.

In the particular case $A=T(U_{Y})$, we get the conclusion by taking $M=\|T\|$.
\end{proof}
Now we show that $\displaystyle\lim_{n\to \infty}\epsilon_k(T_n) = \epsilon_k(T)$
whenever $T_n= Q_nTP_n \to T$ in the Strong Operator Topology with $\|Q_n\|\|P_n\|\leq 1$,
provided that the co-domain space is reflexive.
\begin{theorem}\label{th-main}
 Let $X$ be a normed linear space, $Y$ be a reflexive Banach space and  $T\in BL(X,Y)$.
Let $(P_n)$ and $(Q_n)$ be sequences of operators in $BL(X)$ and $BL(Y)$ respectively
such that $\|Q_n\|\|P_n\|\leq 1$ for each $n\in \mathbb{N}$ and let $T_n:= Q_nTP_n$. 
If $T_n$ converges to $T$ in the pointwise sense of convergence, then for each
$k\in \mathbb{N}$, $\displaystyle\lim_{n\to \infty}\epsilon_k(T_n)= \epsilon_k(T).$
\end{theorem}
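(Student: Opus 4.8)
The plan is to combine the easy inequality already recorded before the statement with a weak-compactness argument that uses the reflexivity of $Y$ in an essential way. Recall that, since $\|Q_n\|\|P_n\|\le 1$, the multiplicativity property $\epsilon_k(SRT)\le\|S\|\epsilon_k(R)\|T\|$ gives $\epsilon_k(T_n)=\epsilon_k(Q_nTP_n)\le\epsilon_k(T)$ for every $n$, so that $\limsup_{n}\epsilon_k(T_n)\le\epsilon_k(T)$. Thus it suffices to establish the reverse inequality
$$\epsilon_k(T)\le L:=\liminf_{n\to\infty}\epsilon_k(T_n).$$
I would fix $\delta>0$, pass to a subsequence $(n_j)$ with $\epsilon_k(T_{n_j})\to L$, and then manufacture an $(L+\delta)$-net for $T(U_X)$ out of the nets for the sets $T_{n_j}(U_X)$. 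Letting $\delta\to 0$ at the end will yield $\epsilon_k(T)\le L$, and hence equality.

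For the construction, for each $j$ large enough that $\epsilon_k(T_{n_j})<L+\delta$, the definition of $\epsilon_k$ provides at most $k$ points whose closed balls of radius $L+\delta$ cover $T_{n_j}(U_X)$; padding by repetition I may assume there are exactly $k$ centers $y_1^{(j)},\dots,y_k^{(j)}$. Since $\|T_{n_j}\|\le\|T\|$ and, by our standing convention, each ball of the net meets $T_{n_j}(U_X)$, the estimate underlying Lemma \ref{lemma-1} shows $\|y_i^{(j)}\|\le 2\|T\|+\delta$ for all $i$ and $j$, so that all the centers lie in a fixed bounded set. Here is where reflexivity enters: in a reflexive Banach space every bounded sequence has a weakly convergent subsequence (closed balls are weakly compact, and Eberlein--\v{S}mulian upgrades this to weak sequential compactness). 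Applying this $k$ times and passing to a further subsequence, I may assume $y_i^{(j)}\rightharpoonup y_i$ in $Y$ for each $i=1,\dots,k$.

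It then remains to verify that $\{y_1,\dots,y_k\}$ is an $(L+\delta)$-net for $T(U_X)$. Fix $x\in U_X$. For each $j$ there is an index $i_j$ with $\|T_{n_j}x-y_{i_j}^{(j)}\|\le L+\delta$, and by the pigeonhole principle some value $i^\ast\in\{1,\dots,k\}$, depending on $x$, occurs for infinitely many $j$. Along that sub-subsequence $T_{n_j}x\to Tx$ in norm by the assumed pointwise convergence, while $y_{i^\ast}^{(j)}\rightharpoonup y_{i^\ast}$; hence $T_{n_j}x-y_{i^\ast}^{(j)}\rightharpoonup Tx-y_{i^\ast}$ weakly. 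Since the norm of $Y$ is weakly lower semicontinuous,
$$\|Tx-y_{i^\ast}\|\le\liminf_{j}\|T_{n_j}x-y_{i^\ast}^{(j)}\|\le L+\delta,$$
so $Tx\in U(y_{i^\ast},L+\delta)$. As $x\in U_X$ was arbitrary, this gives $T(U_X)\subseteq\bigcup_{i=1}^{k}U(y_i,L+\delta)$, whence $\epsilon_k(T)\le L+\delta$, and letting $\delta\to 0$ finishes the argument.

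I expect the main obstacle to be the mismatch of topologies in the final step: the operators converge only strongly, so $T_{n_j}x\to Tx$ in norm, while the centers converge only weakly, and a strong limit plus a weak limit need not be a strong limit. The device that resolves this is the weak lower semicontinuity of the norm, which needs only weak convergence of the difference together with a uniform bound on its norm. Reflexivity is precisely what secures the weak sequential compactness of the bounded family of centers, and the pointwise-convergence hypothesis is precisely what feeds a norm-convergent sequence into that difference; the $x$-dependence of the selected index $i^\ast$ is harmless, since for each $x$ we only need the existence of a single center within distance $L+\delta$.
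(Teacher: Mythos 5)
Your proof is correct, and its skeleton is the same as the paper's: the easy bound $\epsilon_k(T_n)\le\epsilon_k(T)$ from multiplicativity, boundedness of the net centers (Lemma \ref{lemma-1}), weak sequential compactness of bounded sequences in the reflexive space $Y$ (Eberlein--\v{S}mulian) applied $k$ times to extract weak limits $y_1,\dots,y_k$, and the verification that these limits form a net for $T(U_X)$. (The paper frames this as a contradiction argument with a subsequence satisfying $d_m<d-\epsilon$, while you argue directly via $\liminf$ and $\delta$; that difference is cosmetic.) Where you genuinely diverge --- and improve on the paper --- is in the final covering step. The paper fixes $x$ and a functional $f\in Y'$ with $\|f\|\le 1$, then chooses $m^*\ge m(f)$ and an index $l$ with $\|T_{m^*}x-y_l^{m^*}\|<d-\epsilon$, so that $l$ depends on $f$, and concludes ``by Hahn--Banach.'' As written this has a quantifier gap: knowing that for every such $f$ there is \emph{some} index $l(f)$ with $|f(Tx-y_{l(f)})|<d-\tfrac{\epsilon}{2}$ does not by itself produce a single index $l$ with $\|Tx-y_l\|\le d-\tfrac{\epsilon}{2}$. (For instance, in two-dimensional $\ell_2$ take $z_1=e_1$, $z_2=e_2$: every unit functional $f$ satisfies $\min_l|f(z_l)|\le 1/\sqrt{2}$, yet $\|z_1\|=\|z_2\|=1$.) Your pigeonhole step closes exactly this gap: for fixed $x$ you first select one index $i^\ast$ that serves infinitely many $j$, and only then pass to the limit, invoking weak lower semicontinuity of the norm (which is Hahn--Banach in disguise, but now applied with the center already fixed). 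So your argument is the rigorous rendering of the paper's proof, and the paper's own proof is repaired by inserting precisely your pigeonhole selection before the functional-dependent choice of $m^*$.
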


\begin{proof}
Fix $k\in \mathbb{N}$. Let us denote $d_n := \epsilon_k(T_n)$ and $d=\epsilon_k(T)$.\vskip0.15cm
Due to the norm assumptions on $Q_n$ and $P_n$ it follows that $d_n \leq d$ for all $n$, and hence $$lim\, sup\, d_n \leq d.$$

If $d=0$, the result is obvious. Hence we assume $d\neq 0$. Assume, if possible, that $\displaystyle\lim_{n\to \infty} d_n \neq d.$ There exists an $\epsilon >0$, and a subsequence $(d_m)_{m\in \mathbb{N}_1}$ of $(d_n)$
such that $$d_{m} <d-\epsilon \mbox{ for all } m\in \mathbb{N}_1,$$
where $\mathbb{N}_1$ is an infinite subset of $\mathbb{N}$.
Then for each $m\in \mathbb{N}_1$, $T_{m}(U)$ can be covered by $k$ (or less number of) balls  of radii $d-\epsilon.$
Without loss of generality, we assume that there are $k$ balls. Let the
corresponding $(d-\epsilon)$ - net be $\{y_{1}^{m}, y_{2}^{m}, \ldots, y_{k}^{m}\}$
for each $m\in \mathbb{N}_1$. That is,
$$T_m(U)\subseteq \displaystyle\cup_{i=1}^{k} B(y_i^{m}, d-\epsilon).$$

By Lemma \ref{lemma-1}, we can choose  $(y_i^m)$ with  $\|y_i^m\|\leq \|T_m\| \leq \|T\|$ for each 
$i\in \{1, 2, \ldots, k\}$. Therefore $(y_i^m)$ is a bounded sequence in $Y.$ 
Since $Y$ is a reflexive space and $(y_1^m)$ is a bounded sequence, by Eberlyn - Schmulyan theorem \cite{bvl},
$(y_1^m)$ has a weakly convergent subsequence, say $(y_1^{m_{j_1}})$ which converges to some $y_1$
in the weak sense of convergence.
Now since $(y_2^{m_{j_1}})$ is also bounded in $Y$, we get a subsequence $(y_2^{m_{j_2}})$
which is weakly convergent to some $y_2$. Taking subsequence of subsequences, we can find
an infinite set $\mathbb{N}_2\subseteq \mathbb{N}_1$ such that for each $i=1, 2, \ldots, k$,
$y_i^m \to y_i$ in the weak sense of convergence, as
$ m\to \infty\mbox{ in }\mathbb{N}_2.$

That is, for each $f\in Y^{\prime}$, there exists an $m(f)\in \mathbb{N}_2$ such that for each $i\in \{1, 2, \ldots, k\}$,
$$|f(y_i^m) - f(y_i)|<\frac{\epsilon}{4}, \mbox{ for all } m\geq m(f), m\in \mathbb{N}_2.$$
Now, we claim that $ T(U)\subseteq \displaystyle\cup_{i=1}^{k} B(y_i, d-\frac{\epsilon}{2}).$

To prove this claim, we take any $x\in U$ and $f\in Y^{\prime}$ such that $\|f\|\leq 1$.
Since $Tx \in T(U)$ and $T_m x \to Tx$ as $m\to \infty$ in $\mathbb{N}_2$, we can find an $m^*\in \mathbb{N}_2$ such that $m^* \geq m(f)$
and $$\|T_{m^*} x - Tx\|< \frac{\epsilon}{4}.$$
Since $T_{m^*}(U)$ is covered by $k$ balls of radii $d-\epsilon$, we have
$$\|T_{m^*}x- y_l^{m^*}\|<d-\epsilon,$$
for some $l\in \{1, 2, \ldots, k\} \mbox{ and } y_l^{m^*}\in Y$. Now let $y_l=\displaystyle\lim_{m\to \infty}y_l^m$. Then
\begin{eqnarray*}
|f(Tx) - f(y_l)| &\leq& |f(Tx) - f(T_{m^*}x)|\\
&+&|f (T_{m^*}x) - f(y_l^{m^*})|+ |f(y_l^{m^*}) - f(y_{l}) |\\
&<& \frac{\epsilon}{4} + d-\epsilon + \frac{\epsilon}{4}=d-\frac{\epsilon}{2},
\end{eqnarray*}
proving our claim by Hahn-Banach Theorem. But then $d=\epsilon_k(T)\leq d-\frac{\epsilon}{2}<d$,  a contradiction. Thus $\displaystyle\lim_{n\to \infty}d_n =d.$
\end{proof}

\begin{remark}\label{weakerassumption}
It can be observed that in the above theorem if $P_n$ and $Q_n$ are such that $T_n$ converges to $T$ weakly, then the same  proof  given above will work. Hence the theorem could be restated under the weaker assumption that $T_n \stackrel{WOT}{\longrightarrow} T$. Also, it can be seen that the reflexivity assumption on the co-domain is redundant, provided we assume the co-domain to be the dual space of some separable space. In particular, all the above mentioned conditions holds true when $X$ and $Y$ are separable Hilbert spaces.

\end{remark}
The analogue of Theorem \ref{th-main} with these weaker assumptions is given below.

\begin{corollary}\label{maincor}
Let $Y$ be the dual space of some separable space and $X, T, T_n$ be as in Theorem \ref{th-main}. If
$T_n \to T$ in the weak operator topology on $BL(X,Y)$, then
$$\displaystyle\lim_{n\to \infty}\epsilon_k(T_n)=\epsilon_k(T).$$
\end{corollary}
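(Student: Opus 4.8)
The plan is to rerun the proof of Theorem~\ref{th-main} almost verbatim, substituting for the two places where reflexivity and strong (pointwise) convergence were used. As before I would fix $k$, write $d_n=\epsilon_k(T_n)$ and $d=\epsilon_k(T)$, record $\limsup_n d_n\le d$ from the norm constraint $\|Q_n\|\|P_n\|\le 1$, and assume for contradiction that some subsequence $(d_m)_{m\in\mathbb{N}_1}$ obeys $d_m<d-\epsilon$ for a fixed $\epsilon>0$. For each such $m$, Lemma~\ref{lemma-1} supplies a $(d-\epsilon)$-net $\{y_1^m,\ldots,y_k^m\}$ for $T_m(U)$ with $\|y_i^m\|\le 2\|T\|$, so each of the $k$ sequences $(y_i^m)_m$ is norm-bounded in $Y$.

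The first substitution concerns the extraction of convergent subsequences, which in Theorem~\ref{th-main} rested on reflexivity through the Eberlein--\v{S}mulian theorem. Writing $Y=Z'$ for a separable predual $Z$, I would instead invoke the Banach--Alaoglu theorem: the closed ball of radius $2\|T\|$ in $Y$ is weak$^{*}$-compact, and because $Z$ is separable this ball is weak$^{*}$-metrizable, hence weak$^{*}$-sequentially compact. A diagonal extraction over $i=1,\ldots,k$ then produces an infinite set $\mathbb{N}_2\subseteq\mathbb{N}_1$ and limits $y_1,\ldots,y_k\in Y$ with $y_i^m\to y_i$ in the weak$^{*}$ sense as $m\to\infty$ in $\mathbb{N}_2$; that is, $f(y_i^m)\to f(y_i)$ for every $f$ in the canonical image of $Z$ inside $Y'$.

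The claim to be proved is again $T(U)\subseteq\bigcup_{i=1}^k B(y_i,d-\tfrac{\epsilon}{2})$, and the chain of three estimates is structurally identical to the one in Theorem~\ref{th-main}, the single change being that I would test only against functionals $f$ in the canonical image of the predual $Z$ with $\|f\|\le 1$. For each such $f$, the weak$^{*}$ convergence of the finitely many sequences $(y_i^m)_i$ provides an index $m(f)\in\mathbb{N}_2$ beyond which $|f(y_i^m)-f(y_i)|<\tfrac{\epsilon}{4}$ holds simultaneously for all $i$. Fixing $x\in U$, the hypothesis $T_m\stackrel{WOT}{\longrightarrow}T$---which in particular gives $f(T_m x)\to f(Tx)$---lets me pick $m^*\ge m(f)$ in $\mathbb{N}_2$ with $|f(Tx)-f(T_{m^*}x)|<\tfrac{\epsilon}{4}$, and choosing the index $l$ with $\|T_{m^*}x-y_l^{m^*}\|<d-\epsilon$ yields the middle bound $|f(T_{m^*}x)-f(y_l^{m^*})|<d-\epsilon$. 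Adding the three bounds gives $|f(Tx-y_l)|<d-\tfrac{\epsilon}{2}$ for every $f$ in the unit ball of $Z$. The only genuinely new step is the conclusion: in place of Hahn--Banach I would invoke the dual-norm identity $\|Tx-y_l\|=\sup_{\|f\|\le 1}|f(Tx-y_l)|$, the supremum running over the predual unit ball, which at once gives $\|Tx-y_l\|\le d-\tfrac{\epsilon}{2}$ and hence the claim. This forces $d=\epsilon_k(T)\le d-\tfrac{\epsilon}{2}$, a contradiction, and therefore $\lim_n d_n=d$.

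The main obstacle is justifying that the weaker compactness still yields sequential limits, and the replacement of Eberlein--\v{S}mulian is legitimate only because the predual is separable---precisely the hypothesis that makes the weak$^{*}$ topology metrizable on bounded sets. The accompanying subtlety, which I would take care to handle, is that weak$^{*}$ convergence furnishes test functionals from the predual $Z$ rather than from all of $Y'$, so both the subsequence limits and the final norm computation must be phrased in terms of $Z$; it is the dual-norm identity, not Hahn--Banach, that then closes the gap. Since convergence in WOT against all of $Y'$ trivially implies convergence against the subspace $Z\subseteq Y'$, the stated hypothesis is comfortably strong enough for this argument.
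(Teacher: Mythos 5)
Your proposal is correct and takes essentially the same route as the paper: the paper's proof of this corollary is a one-line remark that the argument of Theorem \ref{th-main} goes through verbatim once the Eberlein--\v{S}mulian step is replaced by weak$^{*}$ sequential compactness of bounded sets in the dual of a separable space, which is exactly your substitution. Your additional care in testing only against the canonical image of the predual's unit ball and closing with the dual-norm identity rather than Hahn--Banach is precisely the detail the paper leaves implicit in the phrase ``exactly similar.''
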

\begin{proof}
 The proof is exactly similar to that of Theorem \ref{th-main}, wherein we need to use the fact that $U_Y$ is weak* sequentially compact if $Y$ the dual space of a separable Banach space \cite{bvl}.
\end{proof}

The following example is an explicit illustration of Theorem \ref{th-main}.
\begin{example}
Consider the identity map $I : \ell_p \to \ell_p,\,\, 1< p <\infty$ over the field $\mathbb{R}$.
For each fixed $n\in \mathbb{N}$,
$$\epsilon_n(I) = 1.$$

Now consider the sequence of projection operators $(P_n)$ on $\ell_p$, which converges to the
identity operator $I$ point wise. It follows from \cite[Proposition 1.3.2]{stephani} that
$$1=\|P_n\| \geq \epsilon_k(P_n) \geq k^{-\frac{1}{2n}}.$$

Hence it follows that  $$\epsilon_k(P_n) \to  \epsilon_k(I) \mbox{ as } n\to \infty.$$
\end{example}

\begin{remark}
 It can be seen that if we do not assume $T_n = Q_nTP_n$ with the norm conditions, the conclusion is no longer true. For example, if we take $T_n$ on $\ell_2$ as $(T_n(x))(j)=0$ for $j\neq n$ and $(T_n(x))(n)=x_n$, for each $n\in \mathbb{N}$, then $T_n$ converges to $0$ in SOT; whereas $lim\, \epsilon_1(T_n)=\|T_n\|=1\neq 0= \epsilon_1(0).$
 
\end{remark}

\begin{remark}
If we choose $X=Y$ to be a Banach space with a Schauder basis and $P_n=Q_n$ to be the projections of norm $1$ onto the
finite dimensional subspace spanned by first $n$ elements of the Schauder basis of $X$, then the sequence $T_n$ shall
be identified as the first $n\times n$ block of the infinite matrix that represents $T$. The  theorem brings out the
possibility of using linear algebra techniques in computing $\epsilon_k(T)$.
\end{remark}

\section{A Banach space Application: A better Estimate for Entropy numbers}
Finding estimates for entropy numbers is a difficult task and it has been an interesting problem in the case of many concrete operators (See \cite{gordon}, \cite{kuhn},\cite{kuhn1}, \cite{schtt}). However, not much have been achieved in this regard to the best of our knowledge. In \cite{gordon}, the authors obtained certain estimates for entropy numbers of some special types of diagonal operators on real Banach spaces with a 1-unconditional basis (See Proposition 1.7 of \cite{gordon}). This result was modified for similar diagonal operators on $\ell_p$ spaces (real and complex), $1\leq p\leq \infty$ in \cite{stephani}.

Here, as an application to our approximation techniques to operators on Banach spaces, we
sharpen these estimates thereby strengthening Proposition 1.3.2 of \cite{stephani}.
The major ingredients in the proof
are the techniques used to prove Proposition 1.3.2 \cite{stephani}
and Theorem \ref{th-main}. 
\begin{theorem}\label{th-diag}
Let $\sigma_1 \geq \sigma_2 \geq \ldots \geq \sigma_k\geq  \ldots \geq 0$ and let $D$ be the operator defined on $\ell_p,\; (1< p < \infty)$ by
\begin{equation}\label{diagop}
D((\zeta_1, \zeta_2, \ldots, \zeta_k, \ldots ))=(\sigma_1 \zeta_1, \sigma_2 \zeta_2, \ldots, \sigma_k \zeta_k, \ldots ), \mbox{ for }
(\zeta_n)\in \ell_p,
\mbox{ and }1< p < \infty.
\end{equation}

 Then for each $n\in \mathbb{N}$,
$$\displaystyle\sup_{1\leq k <\infty} n^{-\frac{1}{k}}(\sigma_1\sigma_2\ldots\sigma_k)^{\frac{1}{k}}\leq \epsilon_n(D)
\leq 4\cdot \displaystyle\sup_{1\leq k <\infty} n^{-\frac{1}{k}}(\sigma_1\sigma_2\ldots\sigma_k)^{\frac{1}{k}}$$
in the case of real $\ell_p$ spaces and
$$\displaystyle\sup_{1\leq k <\infty} n^{-\frac{1}{2k}}(\sigma_1\sigma_2\ldots\sigma_k)^{\frac{1}{k}}\leq \epsilon_n(D)
\leq 4\cdot \displaystyle\sup_{1\leq k <\infty} n^{-\frac{1}{2k}}(\sigma_1\sigma_2\ldots\sigma_k)^{\frac{1}{k}}$$
in the case of complex $\ell_p$ spaces.
\end{theorem}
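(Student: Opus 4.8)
The plan is to establish the lower and upper estimates separately, in both the real and complex cases reducing the infinite-dimensional operator $D$ to its finite sections $D_k := P_k D P_k$, where $P_k$ is the norm-one coordinate projection of $\ell_p$ onto the span of the first $k$ basis vectors. Since $\ell_p$ is reflexive for $1<p<\infty$ and $D_k \to D$ in the strong operator topology, Theorem \ref{th-main} gives $\epsilon_n(D_k) \to \epsilon_n(D)$ for each fixed $n$; this is precisely the device that lets finite-dimensional, linear-algebraic estimates be transported to $D$. In fact $\epsilon_n(D_k)$ is nondecreasing in $k$, so $\epsilon_n(D) = \sup_k \epsilon_n(D_k)$.

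For the lower bound I would argue volumetrically. Fix $k$ and regard $D_k$ as a diagonal map on the $k$-dimensional space $\ell_p^k$. Because $\|P_k\|=1$, the multiplicativity property $\epsilon_n(QTP)\le \|Q\|\,\epsilon_n(T)\,\|P\|$ recorded in Section \ref{sec-2} gives $\epsilon_n(D_k)\le \epsilon_n(D)$. The image $D_k(U_{\ell_p^k})$ is the image of the unit ball under $\mathrm{diag}(\sigma_1,\dots,\sigma_k)$, so in the real case its Lebesgue measure equals $(\sigma_1\cdots\sigma_k)\,\mathrm{vol}(U_{\ell_p^k})$. If this set is covered by at most $n$ balls of radius $\epsilon$, each of measure $\epsilon^{k}\,\mathrm{vol}(U_{\ell_p^k})$, then comparing volumes forces $\epsilon \ge n^{-1/k}(\sigma_1\cdots\sigma_k)^{1/k}$, and taking the supremum over $k$ yields the claimed lower bound. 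In the complex case $\ell_p^k$ is a $2k$-dimensional real space, the real Jacobian determinant of $\mathrm{diag}(\sigma_1,\dots,\sigma_k)$ is $(\sigma_1\cdots\sigma_k)^{2}$, and the covering balls have measure proportional to $\epsilon^{2k}$; the same comparison then gives $\epsilon \ge n^{-1/(2k)}(\sigma_1\cdots\sigma_k)^{1/k}$, which is exactly why the exponent changes from $1/k$ to $1/(2k)$.

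For the upper bound I would first reduce to the finite sections: since $\epsilon_n(D)=\sup_k \epsilon_n(D_k)$, it suffices to bound each $\epsilon_n(D_k)$ by $4\,\sup_{1\le j\le k} n^{-1/j}(\sigma_1\cdots\sigma_j)^{1/j}$ (respectively with exponent $1/(2j)$ in the complex case), because this partial supremum is dominated by the full supremum over all $j$. For a fixed section I would split the coordinates at an optimally chosen index $j$: using the inequality $\epsilon_n(A+B)\le \epsilon_n(A)+\|B\|$ from Section \ref{sec-2}, write $\epsilon_n(D_k)\le \epsilon_n(P_j D_k)+\|D_k-P_j D_k\|$, where the tail contributes $\|D_k-P_j D_k\|=\sigma_{j+1}$, while the head $P_j D_k$ is a $j$-dimensional diagonal operator whose image I would cover by an explicit rectangular lattice of centers, the mesh in the $i$-th coordinate taken proportional to $\sigma_i$ so that the total number of centers stays at most $n$ while the covering radius is controlled by $n^{-1/j}(\sigma_1\cdots\sigma_j)^{1/j}$. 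Choosing $j$ to balance the head and tail contributions, essentially the largest index for which $\sigma_j$ is comparable to the supremum, then produces the stated estimate.

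The principal obstacle is this finite covering step, which is where the improvement over Proposition 1.3.2 of \cite{stephani} must be squeezed out. The delicate points are the careful lattice count that yields the sharp constant $4$ rather than the larger constant implicit in the classical argument, the optimal choice of the splitting index $j$ together with the verification that $\sigma_{j+1}$ is absorbed into the supremum, and the correct tracking of the factor-two difference between real and complex scalars so that the head estimate carries the exponent $1/(2j)$ in the complex case. I expect the book-keeping in the lattice construction, rather than any conceptual difficulty, to be the hard part.
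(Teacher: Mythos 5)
Your lower bound is correct and is essentially the paper's own argument: pass to the finite sections, use the multiplicativity property to get $\epsilon_n(D_k)\le\epsilon_n(D)$, and compare Lebesgue volumes, with the real dimension $2k$ in the complex case producing the exponent $\tfrac{1}{2k}$. The genuine gap is in the upper bound, and it is not, as you hope, a matter of book-keeping. Your decomposition $\epsilon_n(D_k)\le\epsilon_n(P_jD_k)+\|D_k-P_jD_k\|=\epsilon_n(P_jD_k)+\sigma_{j+1}$ is exactly the classical scheme behind Proposition 1.3.2 of \cite{stephani}, and within it the constant $4$ is out of reach of the tools you invoke. Write $\delta(n)=\sup_{k}n^{-1/k}(\sigma_1\cdots\sigma_k)^{1/k}$. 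A cutoff index with $\sigma_{j+1}\le c\,\delta(n)$ is guaranteed only for $c>1$ (one needs $n^{1/(j+1)}\le c$), so the tail contributes up to $c\,\delta(n)$. For the head, every lattice or maximal-separated-set count is controlled by the same volume comparison, and since the surviving diagonal entries can be as small as $c\,\delta(n)$, keeping at most $n$ balls forces a covering radius of about $\tfrac{2c}{c-1}\delta(n)$ from these constructions. The total $\bigl(c+\tfrac{2c}{c-1}\bigr)\delta(n)$ is minimized at $c=1+\sqrt{2}$, giving $(3+2\sqrt{2})\,\delta(n)\approx 5.83\,\delta(n)$; the standard choice $c=2$ gives Carl--Stephani's constant $6$. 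So no choice of splitting index and no refinement of the lattice count inside your additive split produces the stated factor $4$: the additive tail term $\sigma_{j+1}$ is the structural obstruction.

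The paper reaches $4$ precisely by never paying that tail term, and this is where Theorem \ref{th-main} is used as more than a reduction device. After disposing of the trivial case $\sigma_1\le 2\delta(n)$, it fixes the crossing index $m$ with $\sigma_{m+1}\le 2\delta(n)<\sigma_m$, takes a maximal $4\delta(n)$-separated system $y_1,\dots,y_N$ in $D_m(U)$ --- maximality gives $D_m(U)\subseteq\bigcup_{i=1}^{N}\bigl(y_i+4\delta(n)\,U\bigr)$, hence $\epsilon_N(D_m)\le 4\delta(n)$, while the packing--volume argument of \cite{stephani} (valid because $\sigma_i>2\delta(n)$ for $i\le m$) gives $N\le n$ --- and then invokes Theorem \ref{th-main} to transfer the bound $4\delta(n)$ from the sections to $D$ itself, rather than writing $\epsilon_n(D)\le\epsilon_n(D_m)+\|D-D_m\|$, which would reinstate $\sigma_{m+1}\le 2\delta(n)$ and return the constant $6$. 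If you want to complete your proof you must reproduce this transfer step, i.e.\ obtain the $4\delta(n)$ covering bound for sections tending to $D$ and pass to the limit (note this is also the delicate point in the paper's own write-up, where the separated-set estimate is established only for the single index $m$); the finite-dimensional lattice refinement you anticipate cannot close the gap.
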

\begin{proof}
For $k\in \mathbb{N}$, define $D_k:= P_k D P_k,$ where $P_k$ are the standard $k^{th}$ projections
defined by $P_k(x)=(\zeta_1, \zeta_2, \ldots, \zeta_k, 0, 0, \ldots), \mbox{ for }
x=(\zeta_1, \zeta_2, \ldots, \zeta_j, \ldots )\in \ell_p,$
and let $\tilde{D}_k=D_k\mid_{R(P_k)}.$

Let $\epsilon > \epsilon_n(\tilde{D}_k)$. Then there exists $\{x_1, x_2, \ldots, x_n\}\subset R(P_k)$
such that $$\tilde{D}_k(U_{R(P_k)})\subseteq \cup_{i=1}^n (x_i+\epsilon \, U_{R(P_k)}).$$
 Now, since $ Vol\, (\tilde{D}_k(U_{R(P_k)}))=\sigma_1\sigma_2\ldots\sigma_k \, Vol\,(U_{R(P_k)}),$ we have by comparison of volumes,
$\sigma_1\sigma_2\ldots\sigma_k \, Vol\,(U_{R(P_k)})\leq n\,\, \epsilon^k \, Vol\,(U_{R(P_k)})$ which implies
$\epsilon \geq n^{-\frac{1}{k}}(\sigma_1\sigma_2\ldots\sigma_k)^{\frac{1}{k}}.$ Taking the limiting case, we get
$\epsilon_n(\tilde{D}_k)\geq n^{-\frac{1}{k}}(\sigma_1\sigma_2\ldots\sigma_k)^{\frac{1}{k}}.$

Now, since $\epsilon_n(\tilde{D}_k)\leq \epsilon_n(D)$, we get
$\epsilon_n(D)\geq n^{-\frac{1}{k}}(\sigma_1\sigma_2\ldots\sigma_k)^{\frac{1}{k}}.$
Taking supremum over $k\in \mathbb{N}$, we get
$\epsilon_n(D)\geq \displaystyle\sup_{1\leq k <\infty}n^{-\frac{1}{k}}(\sigma_1\sigma_2\ldots\sigma_k)^{\frac{1}{k}},$
which proves one side of the required inequality in the real case.

For the converse part, we define
$\delta(n)= \sup_{1\leq k <\infty}n^{-\frac{1}{k}}(\sigma_1\sigma_2\ldots\sigma_k)^{\frac{1}{k}}$
and claim that for each $n\in \mathbb{N}$ there exists an index $r$ with $\sigma_{r+1}\leq 2\,\delta(n)$.
So let $n\in \mathbb{N}$. Since $2^{m}\to \infty$ as $m\to \infty$,
there exists an $r$ with $n\leq 2^{r+1}$ and so $1\leq 2\, n^{-\frac{1}{r+1}}$. Then due to monotonicity of $(\sigma_i)$,

$ \sigma_{r+1} \leq (\sigma_1\sigma_2\ldots\sigma_{r+1})^{\frac{1}{r+1}}\leq 2\, n^{-\frac{1}{r+1}}(\sigma_1\sigma_2\ldots\sigma_{r+1})^{\frac{1}{r+1}}\leq \displaystyle 2 \sup_{1\leq k<\infty}  n^{-\frac{1}{k}}(\sigma_1\sigma_2\ldots\sigma_{k})^{\frac{1}{k}}.$

Thus $\sigma_{r+1} \leq 2\,\delta(n).$ Now suppose that $\sigma_1 \leq 2\,\delta(n)$. Then
$\epsilon_n(D)\leq \|D\|=\sigma_1 \leq 2\,\delta(n),$ which gives
$\sup_{1\leq k <\infty}n^{-\frac{1}{k}}(\sigma_1\sigma_2\ldots\sigma_k)^{\frac{1}{k}} \leq
\epsilon_n(D)\leq 2\, \sup_{1\leq k <\infty}n^{-\frac{1}{k}}(\sigma_1\sigma_2\ldots\sigma_k)^{\frac{1}{k}}.$
%
%

Now, if $\sigma_1 > 2\,\delta(n)$, there exists an $m$ with $\sigma_{m+1}\leq 2\, \delta(n) <\sigma_m.$
Now $D_m : \ell_p \to \ell_p$ is of rank $m$. Let $y_1, y_2, \ldots, y_N$ be a maximal system of elements
in $D_m(U)$ with $\|y_i-y_j\|> 4\,\delta(n)\mbox{ for } i\neq j.$
Then $D_m(U)\subseteq \cup_{i=1}^N (y_i+4\,\delta(n) U).$
Then $\epsilon_N(D_m)\leq 4\,\delta(n).$
Now using Theorem \ref{th-main} as $m\to \infty$, we get
$\epsilon_N(D)\leq 4\,\delta(n)=
4\,\sup_{1\leq k <\infty}n^{-\frac{1}{k}}(\sigma_1\sigma_2\ldots\sigma_k)^{\frac{1}{k}}.$
It can be shown that $n\geq N$ (see \cite{stephani}). Hence
$$\epsilon_n(D)\leq 4\cdot \sup_{1\leq k <\infty}n^{-\frac{1}{k}}(\sigma_1\sigma_2\ldots\sigma_k)^{\frac{1}{k}}.$$

The proof for the complex $\ell_p$ case follows in a similar way.
\end{proof}
\begin{remark}
Theorem \ref{th-diag} is valid for $p=1$ also, since $\ell_1$ is the dual of $c_0$
and therefore Corollary \ref{maincor} is applicable. For $\ell_{\infty}$, even though
it is the dual of $\ell_1$ and Corollary \ref{maincor} is applicable, we can not
obtain the estimate due to lack of useful projections.
\end{remark}

\section{ Entropy Numbers of Hilbert space Operators}

In this section we discuss about the entropy numbers of Hilbert space operators.
Let $H_1,H_2$ denote complex Hilbert spaces and $T:H_1\rightarrow H_2$ be a bounded linear operator with
$T^*$ denotes the adjoint of $T$. 

Note that $T$ has the polar decomposition $T=V|T|$, where $V$ is a partial isometry and $|T|$ is the unique positive square root of the operator $T^*T$. This decomposition is unique provided the null space of $V$ and the null space of $|T|$ are the same. We refer \cite{halmosproblembook} for more details.
The question of the identity $\epsilon_k(T)=\epsilon_k(T^*)$ was motivated by the problem posed by B. Carl,
whether $\epsilon_k(T^*)=O(n^{-\alpha})$ if $\epsilon_k(T)$ is so. Some positive anwers (partial) to this problem
are available in the literature \cite{EE,gordon}.
The following theorem was proved answering the problem for compact operators between Hilbert spaces.

\begin{theorem}\label{th-carl}(\cite{EE})
 Let$ H$ be a Hilbert space and let $T\in BL(H)$ be compact. Then for all $n\in N$, $$\epsilon_n(T)=\epsilon_n(T^*)=\epsilon_n(|T|).$$
\end{theorem}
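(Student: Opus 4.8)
The plan is to sidestep the compactness hypothesis almost entirely and instead exploit the polar decomposition $T = V|T|$ together with the multiplicativity (ideal) property of entropy numbers, $\epsilon_n(SRT)\leq \|S\|\,\epsilon_n(R)\,\|T\|$, recorded in Section~\ref{sec-2}. I would split the assertion into two independent claims: first that $\epsilon_n(T) = \epsilon_n(|T|)$, and second that $\epsilon_n(|T|) = \epsilon_n(|T^*|)$. Granting these, and applying the first claim to the operator $T^*$ (which is again a bounded operator on $H$) to obtain $\epsilon_n(T^*) = \epsilon_n(|T^*|)$, the desired chain $\epsilon_n(T) = \epsilon_n(|T|) = \epsilon_n(|T^*|) = \epsilon_n(T^*)$ follows immediately.

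For the first claim I would argue by two inequalities. Since $T = V|T|$ with $\|V\|\leq 1$, the ideal property gives $\epsilon_n(T) \leq \|V\|\,\epsilon_n(|T|) \leq \epsilon_n(|T|)$. For the reverse direction I recover $|T|$ from $T$: because $V^*V$ is the orthogonal projection onto $\overline{R(|T|)} = (\ker|T|)^\perp$, and $|T|$ already maps into that subspace, one has $V^*T = V^*V|T| = |T|$, whence $\epsilon_n(|T|) = \epsilon_n(V^*T) \leq \|V^*\|\,\epsilon_n(T) \leq \epsilon_n(T)$. Combining the two inequalities yields equality.

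For the second claim the crucial algebraic fact is $|T^*| = V|T|V^*$. I would verify this from $(V|T|V^*)^2 = V|T|(V^*V)|T|V^* = V|T|^2 V^* = TT^*$, using $(V^*V)|T| = |T|$ once more, together with the positivity of $V|T|V^*$ and the uniqueness of the positive square root, so that $V|T|V^* = (TT^*)^{1/2} = |T^*|$. The ideal property then gives $\epsilon_n(|T^*|) \leq \|V\|\,\epsilon_n(|T|)\,\|V^*\| \leq \epsilon_n(|T|)$, while the symmetric identity $V^*|T^*|V = (V^*V)|T|(V^*V) = |T|$ (using also $|T|(V^*V) = |T|$) gives the reverse inequality, and hence $\epsilon_n(|T|) = \epsilon_n(|T^*|)$.

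The only genuine subtlety — and the step I would handle most carefully — is the bookkeeping with the partial isometry $V$: the identities $(V^*V)|T| = |T|$, $|T|(V^*V) = |T|$, and $V^*V = P_{\overline{R(|T|)}}$ all rest on the defining property of the polar decomposition, namely $\ker V = \ker|T|$, so I would state these projection identities explicitly before invoking them. I note that compactness is inessential to this argument; it serves only to supply the concrete singular value expansion $T = \sum_k \sigma_k \langle \cdot, e_k\rangle f_k$, under which $T$, $T^*$ and $|T|$ visibly share the same sequence $(\sigma_k)$ and therefore, by the unitary invariance of $\epsilon_n$, the same entropy numbers.
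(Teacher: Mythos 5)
Your proof is correct, but it is not the proof the paper relies on for this statement. Theorem \ref{th-carl} is quoted from \cite{EE}, and the paper explicitly notes that the proof there rests on the spectral theorem (singular value expansion) for compact operators, so compactness is essential to that argument. Your argument instead runs through the polar decomposition $T=V|T|$ and the ideal property $\epsilon_n(SRT)\leq\|S\|\,\epsilon_n(R)\,\|T\|$, and, as you observe yourself, never uses compactness. This is precisely the argument the paper attributes to Carl's review \cite{Carlreview} and reproduces later as Theorem \ref{hilbert}, valid for every bounded operator between two Hilbert spaces (no separability, no compactness). Your bookkeeping with the partial isometry is sound: $\ker V=\ker|T|$ gives $V^*V=P_{\overline{R(|T|)}}$, hence $V^*T=(V^*V)|T|=|T|$ and $|T|(V^*V)=|T|$, and the identity $|T^*|=V|T|V^*$ does follow from positivity and uniqueness of the positive square root of $TT^*$. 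One simplification worth noting: the detour through $|T^*|$ is unnecessary, since $T^*=|T|V^*$ and $|T|=T^*V$ (the adjoint of $|T|=V^*T$) give $\epsilon_n(T^*)=\epsilon_n(|T|)$ directly by two applications of the ideal property; this is the one-line argument in the paper's Theorem \ref{hilbert}. As for what each approach buys: the spectral-theorem proof of \cite{EE} is self-contained for compact operators but does not extend beyond them, whereas your (Carl's) polar-decomposition proof is both shorter and strictly more general, and in fact renders the paper's approximation machinery (Theorem \ref{th-main} feeding into Theorem \ref{edmund-karl}) unnecessary for this particular identity --- exactly the point the paper itself concedes when it states Theorem \ref{hilbert}.
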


The proof given in \cite{EE} makes use of the spectral theorem for compact operators between
Hilbert spaces. It is well known that every bounded linear operator between separable Hilbert spaces
can be approximated by compact operator in the strong sense of convergence, with the help of orthonormal projections.
This observation helps us to extend Theorem \ref{th-carl} to all bounded linear operators between seperable Hilbert space.

\begin{theorem}\label{edmund-karl}
Let $H_1,H_2$ be complex separable Hilbert spaces and  $T\in  BL(H_1,H_2)$. Then $\epsilon_n(T)=\epsilon_n(T^*)=\epsilon_n(|T|)$.
\end{theorem}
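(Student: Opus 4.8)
The plan is to reduce the statement to the compact case already settled in Theorem \ref{th-carl}, by approximating $T$ in the strong operator topology by finite rank operators of the sandwiched form required by Theorem \ref{th-main}. Fix orthonormal bases of the separable spaces $H_1$ and $H_2$, and let $P_n \in BL(H_1)$ and $Q_n \in BL(H_2)$ be the orthogonal projections onto the spans of the first $n$ basis vectors. These are self-adjoint, satisfy $\|P_n\| = \|Q_n\| = 1$, and converge to the respective identities in SOT. Set $T_n := Q_n T P_n$. Each $T_n$ has finite rank, hence is compact, and $\|Q_n\|\|P_n\| \le 1$, so Theorem \ref{th-main} applies since Hilbert spaces are reflexive.

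First I would establish $\epsilon_k(T) = \epsilon_k(T^*)$. A short estimate using $\|P_n\|, \|Q_n\| \le 1$ together with the continuity of $T$ and $T^*$ shows that $T_n \to T$ and $T_n^* = P_n T^* Q_n \to T^*$ in SOT; note that $T_n^*$ again has the admissible sandwiched form for the operator $T^* \in BL(H_2, H_1)$, whose codomain $H_1$ is reflexive. Applying Theorem \ref{th-main} to both sequences gives $\epsilon_k(T_n) \to \epsilon_k(T)$ and $\epsilon_k(T_n^*) \to \epsilon_k(T^*)$. Since each $T_n$ is compact, Theorem \ref{th-carl} (applied to the compact operator $T_n$ between Hilbert spaces) yields $\epsilon_k(T_n) = \epsilon_k(T_n^*) = \epsilon_k(|T_n|)$ for every $n$. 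Passing to the limit in the first equality gives $\epsilon_k(T) = \epsilon_k(T^*)$.

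It remains to prove $\epsilon_k(T) = \epsilon_k(|T|)$, which I expect to be the main obstacle, because $|T_n| = (T_n^*T_n)^{1/2}$ is \emph{not} of the sandwiched form $R_n|T|S_n$, so Theorem \ref{th-main} cannot be quoted verbatim for the sequence $(|T_n|)$. One inequality is free: from the polar decomposition $T = V|T|$ with $\|V\| \le 1$ and the ideal property $\epsilon_k(SR) \le \|S\|\epsilon_k(R)$, one gets $\epsilon_k(T) \le \epsilon_k(|T|)$. For the reverse I would exploit that $V$ restricts to an isometry of $\overline{R(|T|)}$ onto $\overline{R(T)}$. Since $|T|(U_{H_1}) \subseteq \overline{R(|T|)}$, and the entropy numbers of a subset of a Hilbert space are unchanged if the covering centers are required to lie in the closed subspace containing it (orthogonal projection onto that subspace does not increase distances), the isometry $V$ carries an optimal cover of $|T|(U_{H_1})$ to one of $T(U_{H_1}) = V(|T|(U_{H_1}))$ and back. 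This gives $\epsilon_k(T) = \epsilon_k(|T|)$ directly, in fact for every bounded $T$, and completes the chain $\epsilon_k(T) = \epsilon_k(T^*) = \epsilon_k(|T|)$.

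To instead stay entirely within the approximation framework, I would verify that $T_n^*T_n \to T^*T$ in SOT and invoke the SOT-continuity of the square root on uniformly bounded positive operators (a Weierstrass argument reducing to polynomials, where SOT-continuity is elementary) to conclude $|T_n| \to |T|$ in SOT. Combined with the bound $\epsilon_k(|T_n|) = \epsilon_k(T_n) \le \epsilon_k(T) \le \epsilon_k(|T|)$ and the $\liminf$ mechanism inside the proof of Theorem \ref{th-main}, which uses only SOT convergence and the boundedness of the covering centers furnished by Lemma \ref{lemma-1}, and not the sandwiched form, this forces $\epsilon_k(|T_n|) \to \epsilon_k(|T|)$; since also $\epsilon_k(|T_n|) = \epsilon_k(T_n) \to \epsilon_k(T)$, the identity $\epsilon_k(T) = \epsilon_k(|T|)$ follows. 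The delicate point throughout is precisely that the absolute value does not respect the sandwiched structure, so the $|T|$ identity must be supplied either by the isometry argument or by separately furnishing the missing $\limsup$ inequality from the polar decomposition.
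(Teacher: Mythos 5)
Your proposal is correct, and its first half --- establishing $\epsilon_k(T)=\epsilon_k(T^*)$ by sandwiching $T_n=Q_nTP_n$, noting $T_n^{*}=P_nT^{*}Q_n$, and combining Theorem \ref{th-main} with the compact case Theorem \ref{th-carl} --- is exactly the paper's argument. Where you genuinely diverge is the identity $\epsilon_k(T)=\epsilon_k(|T|)$. You are right that $|T_n|$ is not of the sandwiched form, but the paper sidesteps this obstacle without either of your two devices: it never forms $|T_n|$ at all, and instead applies Theorem \ref{th-main} to $|T|$ itself, sandwiched as $P_n|T|P_n$, feeding the polar decomposition into the multiplicativity property. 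Writing $|T|=V^{*}T$ gives $\epsilon_k(P_n|T|P_n)=\epsilon_k(P_nV^{*}TP_n)\le \epsilon_k(T)$ because $\|P_nV^{*}\|\,\|P_n\|\le 1$, and letting $n\to\infty$ yields $\epsilon_k(|T|)\le\epsilon_k(T)$; symmetrically $\epsilon_k(Q_nTP_n)=\epsilon_k(Q_nV|T|P_n)\le\epsilon_k(|T|)$, and the limit gives $\epsilon_k(T)\le\epsilon_k(|T|)$. Your first route --- using that $V$ restricts to an isometry of $\overline{R(|T|)}$ onto $\overline{R(T)}$, with covering centers projected into those subspaces, to transport optimal covers in both directions --- is correct and is essentially Carl's argument, which the paper records separately as Theorem \ref{hilbert}; it buys generality (no separability, no approximation, valid for every bounded $T$) and makes the $|T|$ identity independent of Theorem \ref{th-main} altogether. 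Your second route is also sound: the sandwich structure enters the proof of Theorem \ref{th-main} only in the $\limsup$ direction, so the $\liminf$ mechanism (SOT convergence, uniform boundedness $\||T_n|\|\le\|T\|$, Lemma \ref{lemma-1}, reflexivity) does apply to $|T_n|\to|T|$, and the SOT convergence of $|T_n|$ follows from $T_n^{*}T_n\to T^{*}T$ plus Weierstrass approximation of the square root on uniformly bounded positive operators; but this costs you re-opening the proof of Theorem \ref{th-main} rather than citing it as a black box, together with the square-root continuity lemma --- machinery that the paper's trick of sandwiching $|T|$ directly avoids entirely.
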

\begin{proof}
For $n\in \mathbb{N}$, let $P_n\in B(H_1)$
and $Q_n\in B(H_1)$ denote the $n^{th}$ standard orthonormal projections, which converges to the identity
operator on the respective spaces in the strong operator topology. Then,
$Q_nTP_n$ and $P_nT^*Q_n$ are compact operators satisfying
$$Q_nTP_n \stackrel{SOT}{\longrightarrow} T \mbox{ and }
P_nT^*Q_n \stackrel{SOT}{\longrightarrow} T^*, \mbox{ as } n \to \infty.$$
Also $\|P_n\|\leq 1, \|Q_n\|\leq 1.$
Hence by Theorem \ref{th-main}, for each $k\in \mathbb{N}$,
$$\epsilon_k(Q_nTP_n) {\longrightarrow} \epsilon_k(T) \mbox{ and }
\epsilon_k(P_nT^*Q_n) {\longrightarrow} \epsilon_k(T^*), \mbox{ as } n \to \infty.$$
Since $Q_nTP_n$ and $P_nT^*Q_n$ are compact operators, by Theorem \ref{th-carl} in \cite{EE},
$$\epsilon_k(Q_nTP_n)=\epsilon_k(P_nT^*Q_n)\mbox{ for each } k\in \mathbb{N}, n\in \mathbb{N},$$
we get the required conclusion, $\epsilon_k(T)=\epsilon_k(T^*)$.

Let $T=V|T|$ be the polar decomposition of $T.$ Note that $|T|=V^*T$.  Now
\[
\epsilon_k(P_n|T|P_n)=\epsilon_k(P_nV^*TP_n)\leq \epsilon_k(T), \textrm{ since }\|P_n\|\|V^{*}\|\|P_n\|\leq 1
\]
By taking limit $n \rightarrow\infty$, we have $\epsilon_k(|T|)\leq \epsilon_k(T).$

To prove the other way inequality, consider
\begin{align*}
\epsilon_k(P_nTQ_n)&=\epsilon_k(P_nV|T|Q_n) \leq \epsilon_k(|T|) 
\end{align*}
Taking limit as $n\rightarrow \infty$, we obtain $\epsilon_k(T)\leq \epsilon_k(|T|)$. Thus $\epsilon_k(T)=\epsilon_k(|T|)$ for each $k\in \mathbb N$.
\end{proof}
Note that, in the above theorem we have used the separability of Hilbert spaces to get finite rank projections which converges to the identity operator on the corresponding Hilbert space with respect to the strong operator topology. If the separability condition is dropped we may get finite rank projections but they  may not converge to the identity operator in the
strong operator topology. By a different approach a simple proof of Theorem \ref{edmund-karl} by dropping the assumption of separability of the space was suggested by B. Carl \cite{Carlreview}) in the review of the paper \cite{EE}.  We remark that the same technique works for operators defined between two different Hilbert spaces. In Theorem \ref{hilbert}, we state Carl's result for bounded linear operators defined between two different Hilbert spaces and for the sake of completeness, we produce the proof.

\begin{theorem}\label{hilbert}
Let $T\in BL(H_1,H_2)$. Then $\epsilon_k(T)=\epsilon_k(T^*)=\epsilon_k(|T|)$.
\end{theorem}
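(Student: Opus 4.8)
The plan is to deduce all three equalities from the single submultiplicative property $\epsilon_k(SRU)\le \|S\|\,\epsilon_k(R)\,\|U\|$ recorded in Section~\ref{sec-2} (see \cite{stephani}), combined with the polar decomposition $T=V|T|$. The essential advantage over the argument for Theorem~\ref{edmund-karl} is that this route uses no approximation and therefore no separability: the polar decomposition is available for an arbitrary $T\in BL(H_1,H_2)$, with $V$ a partial isometry whose initial space is $\overline{\operatorname{ran}|T|}=(\ker T)^{\perp}$, so that $\|V\|\le 1$ and $\|V^*\|\le 1$.

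First I would record the algebraic identities that drive everything. Since $V^*V$ is the orthogonal projection onto $\overline{\operatorname{ran}|T|}$, one has $V^*V\,|T|=|T|=|T|\,V^*V$, from which
$$|T|=V^*T,\qquad T^*=|T|V^*,\qquad |T|=T^*V.$$
Indeed, the first follows by multiplying $T=V|T|$ on the left by $V^*$; the second is just the adjoint of $T=V|T|$; and the third follows by multiplying $T^*=|T|V^*$ on the right by $V$ and using $|T|V^*V=|T|$.

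With these in hand the entropy estimates are immediate. For the first equality, $T=V|T|$ gives $\epsilon_k(T)\le \|V\|\,\epsilon_k(|T|)\le \epsilon_k(|T|)$, while $|T|=V^*T$ gives $\epsilon_k(|T|)\le \|V^*\|\,\epsilon_k(T)\le \epsilon_k(T)$; hence $\epsilon_k(T)=\epsilon_k(|T|)$. Symmetrically, $T^*=|T|V^*$ yields $\epsilon_k(T^*)\le \epsilon_k(|T|)$ and $|T|=T^*V$ yields $\epsilon_k(|T|)\le \epsilon_k(T^*)$, so $\epsilon_k(T^*)=\epsilon_k(|T|)$. Chaining the two through the common middle term $\epsilon_k(|T|)$ gives $\epsilon_k(T)=\epsilon_k(T^*)=\epsilon_k(|T|)$ for every $k\in\mathbb{N}$.

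The only genuine obstacle is the verification of the two reverse identities $|T|=V^*T$ and $|T|=T^*V$: these are precisely where the defining property of the partial isometry must be used, namely that $\ker V=\ker|T|$, so that $V^*V$ acts as the identity on the closed range of $|T|$. Once this is settled, the remainder is a purely mechanical application of submultiplicativity together with the norm bounds $\|V\|\le 1$ and $\|V^*\|\le 1$; the degenerate case $T=0$ (where $V=0$) is trivial since all entropy numbers vanish.
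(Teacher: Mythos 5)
Your proof is correct and follows essentially the same route as the paper's own argument (due to Carl): polar decomposition $T=V|T|$, the identities $|T|=V^*T$ and $T^*=|T|V^*$, and the submultiplicativity $\epsilon_k(SRU)\le\|S\|\,\epsilon_k(R)\,\|U\|$. The only difference is presentational: you make explicit the identity $|T|=T^*V$ and the role of $V^*V$ as the projection onto $\overline{\operatorname{ran}|T|}$, which the paper leaves implicit in its terse proof.
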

\begin{proof}
Let $T=V|T|$ be the polar decomposition of $T$. Then $\epsilon_{k}(T)\leq \|V\|\epsilon_k(|T|)=\epsilon(|T|)$ 
for each $k\in \mathbb N$. Also, as $|T|=V^*T$, the other inequality also holds true. Since $T^*=|T|V^*$, the equality $\epsilon_k(T^*)=\epsilon_k(|T|)$ holds true.
\end{proof}

\begin{remark}
We would like to make the remark that if one requires to prove the equality $\epsilon_k(T)=\epsilon_k(T^*)$ 
for operators between certain special Banach spaces (like $\ell_p, \,\, 1\leq p< \infty$, where one has
finite rank projections of norm $1$ which converges to identity operator in SOT/WOT), it is sufficient to
prove the result for compact operators or for finite rank operators, in view of Theorem \ref{th-main}. 
However, it is still an open problem to prove the equality $\epsilon_k(T)=\epsilon_k(T^*)$ for finite 
rank/compact operators between Banach spaces.
\end{remark}


\subsection{Some of the important problems}
\begin{enumerate}
\item Consider the original problem by Carl, it is still an open problem to show the identity $\epsilon_k(T)=\epsilon_k(T^*)$ for bounded operators on arbitrary Banach spaces. Even it is not clear whether $\epsilon_k(T^*)=O(n^{-\alpha})$ if $\epsilon_k(T)$ is so in the general setting. However, since our techniques are applicable for reflexive Banach spaces with suitable projections, it suffices to show these for compact operators on the such spaces where we are able to approximate bounded operators by sequence of compact operators (in WOT).
\item It is not known to us whether the conclusion of Theorem \ref{th-main} holds for operators whose co-domains are not isometric to dual of separable spaces. In other words, an example to show that the duality assumption on the co-domain space is mandatory will be worth finding.
\item  It is of interest to check whether the estimate obtained in Theorem \ref{th-diag} is strict or
can be improved further.
\item To study the behavior of entropy numbers under a random perturbation is another important problem.
\end{enumerate}

{\bf Acknowledgments.} 
The authors wish to thank Prof. M.N.N. Namboodiri of CUSAT, Dr. G. Ramesh and Dr. D. Venku Naidu of IIT Hyderabad, 
for their valuable suggestions. Also we are thankful to Kerala School Of Mathematics (KSOM), 
Kozhikode for the local hospitality during their visits.

\bibliographystyle{amsplain}

\end{document}